\documentclass{amsart}
\usepackage{amsfonts}
\usepackage{amsmath}
\usepackage{amssymb}
\usepackage{amsthm}

\newtheorem{theorem}{Theorem}
\newtheorem{prop}{Proposition}
\newtheorem{lemma}{Lemma}
\newtheorem{rem}{Remark}

\newtheorem{fact}{Fact}

\begin{document}

\title{Geometric version of Wigner's theorem for Hilbert Grassmannians}
\author{Mark Pankov}
\subjclass[2000]{}
\keywords{Hilbert Grassmannian, principal angles, Wigner's theorem}
\address{Faculty of Mathematics and Computer Science, 
University of Warmia and Mazury, S{\l}oneczna 54, Olsztyn, Poland}
\email{pankov@matman.uwm.edu.pl}

\maketitle

\begin{abstract}
We show that the transformations of Grassmannians (of complex Hilbert spaces) 
induced by linear or conjugate-linear isometries can be characterized as 
transformations preserving some of principal angles
(corresponding to the orthogonality, adjacency and ortho-adjacency relations).
\end{abstract}

\section{Introduction}
Let $H$ be a complex Hilbert space of dimension not less than $3$. 
There is a natural one-to-one correspondence between closed subspaces of $H$ and 
projections, i.e. self-adjoint idempotents in the algebra of all bounded linear operators on $H$. 
Denote by ${\mathcal G}_{k}(H)$ the Grassmannian formed by all $k$-dimensional subspaces of $H$, 
in other words, all projections of rank $k$. 
We can assume that $\dim H\ge 2k$ (by duality). 

In quantum mechanics, projections of rank $1$ are identified with so-called {\it pure states}.
The {\it transition probability} ${\rm Tr}(P,P')$ for two pure states $P, P'\in {\mathcal G}_{1}(H)$ is equal to 
$|\langle x,x'\rangle|^2$, where $x\in P$ and $x'\in P'$ are unit vectors. 
By classical Wigner's theorem, every bijective transformation of ${\mathcal G}_{1}(H)$ preserving the transition probability 
is induced by an unitary or anti-unitary operator on $H$.
 This statement plays an important role in the mathematical foundations of quantum mechanics.
 
 It was observed by Uhlhorn in \cite{Uhlhorn} that the same holds for
 bijective transformations of ${\mathcal G}_{1}(H)$ preserving the orthogonality relations in both directions
 (this fact is a simple consequence of the Fundamental Theorem of Projective Geometry). 
 Since the transition probability is zero if and only if the corresponding pure states are orthogonal,
classical Wigner's  theorem is contained in Uhlhorn's statement.
However, there is a non-bijective version of Wigner's theorem which states that 
every (not necessarily bijective) transformation of ${\mathcal G}_{1}(H)$ preserving the transition probability is induced by 
a linear or conjugate-linear isometry on $H$ (see, for example, \cite{Geher1}).

The {\it principal angles} $0\le \theta_{1}\le\dots\le \theta_{k}\le \pi/2$ 
between $k$-dimensional subspaces $X,Y \subset H$ are defined as follows. 
Let $\theta_{1}$ be the minimal value of $\arccos(|\langle x,y\rangle|)$ for unit vectors $x\in X$ and $y\in Y$.
Let also $x_{1}\in X$ and $y_{1}\in Y$ be unit vectors realizing this minimum.
For $i\ge 2$ the principal angle $\theta_{i}$ and unit vectors $x_{i}\in X$, $y_{i}\in Y$ are defined recursively, i.e.
$\theta_{i}$ is the minimal value of $\arccos(|\langle x,y\rangle|)$ for unit vectors $x\in X$ and $y\in Y$
orthogonal to $x_{1},\dots,x_{i-1}$ and $y_{1},\dots,y_{i-1}$ (respectively),
and $x_{i}\in X$, $y_{i}\in Y$ are unit vectors satisfying the latter conditions and realizing this minimum.

In  \cite{Molnar1,Molnar2} Moln\'ar  proposed the following extension of Wigner's theorem:
every (not necessarily bijective) transformation of ${\mathcal G}_{k}(H)$ preserving all principal angles between subspaces
is induced by a linear or conjugate-linear isometry on $H$
or $\dim H=2k$ and it is the composition of the transformation induced by an isometry and the orthocomplementation.
Recently, Geh\'er \cite{Geher2} obtained the same result for transformations of ${\mathcal G}_{k}(H)$
preserving the transition probability
(the transition probability is defined as the sum of squares of cosines for all principal angles).

Our main result states that the transformations of ${\mathcal G}_{k}(H)$ ($\dim H\ge 2k>2$)
induced by linear or conjugate-linear isometries can be characterized as 
transformations preserving some of principal angles
corresponding to the orthogonality, adjacency and ortho-adjacency relations.
To prove this statement, we use a modification of methods from \cite[Chapter 3]{Pankov-book}.

Gy\"ory \cite{Gyory} and \v{S}emrl \cite{Semrl} (see also \cite{GeherSemrl}) proved independently that 
every bijective transformation of ${\mathcal G}_{k}(H)$ preserving the orthogonality relation in both directions
is induced by an unitary or anti-unitary operator on $H$ under the assumption that $\dim H>2k$
(if $\dim H=2k$, then for every $X\in {\mathcal G}_{k}(H)$ the orthogonal complement $X^{\perp}$ is the unique element of ${\mathcal G}_{k}(H)$ orthogonal to $X$  and such transformations are wild).
It was observed in  \cite{Semrl} that 
there are non-bijective transformations of ${\mathcal G}_{k}(H)$ preserving the orthogonality relation in both directions 
which cannot be obtained from linear or conjugate-linear isometries.
As an application of the main result, we show that this happens only in the infinite-dimensional case.

\section{Results}
Let $X$ be a set and let $R\subset X\times X$ be a relation on $X$.  
We write $xRy$ if $(x,y)\in R$.
A transformation $f:X\to X$ is said to be $R$ {\it preserving} if for all $x,y\in X$ we have
$$xRy \Longrightarrow f(x)Rf(y);$$
in the case when 
$$xRy \Longleftrightarrow f(x)Rf(y)$$
for all $x,y\in X$, we say that $f$ is $R$ {\it preserving in both directions}.

It is clear that two elements of ${\mathcal G}_{k}(H)$ are orthogonal if and only if  all principal angles between them are equal to $\pi/2$.
We will always suppose that $\dim H\ge 2k$ (otherwise, there are no orthogonal pairs in ${\mathcal G}_{k}(H)$).

Two elements of ${\mathcal G}_{k}(H)$ are called {\it adjacency} if their intersection is $(k-1)$-dimensional, in other words,
only one of the principal angles between them is non-zero.
Two adjacent elements of ${\mathcal G}_{k}(H)$ are said to be {\it ortho-adjacency} 
if the unique non-zero principal angle between them is equal to $\pi/2$.

\begin{theorem}
Suppose that $\dim H>2k>2$.  
Let $f$ be an orthogonality preserving transformation of ${\mathcal G}_{k}(H)$ which satisfies one of the following additional conditions:
\begin{enumerate}
\item[(A)] $f$ is adjacency preserving.
\item[(OA)] $f$ is an ortho-adjacency preserving injection.
\end{enumerate}
Then $f$ is induced by a linear or conjugate-linear isometry on $H$.
\end{theorem}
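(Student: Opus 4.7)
The plan is to show that in both cases $f$ is induced by a semilinear embedding $L:H\to H$, using a non-bijective Chow-type theorem for adjacency preserving transformations from \cite[Chapter 3]{Pankov-book}, and then to upgrade $L$ to a linear or conjugate-linear isometry using orthogonality preservation. The orthogonality preserving hypothesis will play two roles in the argument: it will rule out the degenerate possibilities coming out of the Chow-type classification, and it will force the semilinear map $L$ to be (a scalar multiple of) an isometry.

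Under assumption (A), $f$ is directly adjacency preserving, so the non-bijective Chow-type classification applies. It provides a short inventory of possibilities: either $f$ is of the form $X\mapsto L(X)$ for a semilinear embedding $L:H\to H$, or its image is contained in a single star $\{Z:W\subset Z\}$ with $W$ of dimension $k-1$, or in a single top $\{Z:Z\subset U\}$ with $U$ of dimension $k+1$, or in some analogous restrictive subset of the Grassmann graph. The degenerate alternatives are ruled out by orthogonality preservation: since $\dim H>2k$ one can choose orthogonal pairs $X\perp Y$ in ${\mathcal G}_{k}(H)$, and then $f(X),f(Y)$ would have to span a $2k$-dimensional subspace, which is impossible inside any star (where images automatically share a $(k-1)$-dimensional subspace) or top (where $\dim(f(X)+f(Y))\le k+1<2k$ because $k>1$). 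Hence $f$ is induced by a semilinear embedding $L$.

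In case (OA) the main obstacle, and what I expect to be the most delicate step, is the upgrade from ortho-adjacency preservation to adjacency preservation. Since ortho-adjacency is strictly weaker than adjacency, one must give a characterization of the adjacency relation that uses only orthogonality and ortho-adjacency, so that its preservation by $f$ is automatic. The natural strategy is to recognize a pair $X,Y$ of adjacent subspaces through the configuration of subspaces $Z$ which are ortho-adjacent to both $X$ and $Y$ and orthogonal to suitable witness subspaces; the injectivity assumption in (OA) is precisely what prevents such families of witnesses from being collapsed under $f$ and thus allows the characterization to be transported. Once adjacency preservation has been established, the argument of case (A) applies verbatim.

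The final step, common to both cases, is the passage from the semilinear embedding $L$ to an actual linear or conjugate-linear isometry on $H$. Orthogonality of $k$-dimensional subspaces, together with the freedom to choose orthonormal bases inside each of them, forces $L$ to send orthogonal vectors of $H$ to orthogonal vectors. A classical argument then shows that any semilinear orthogonality preserving map on a complex Hilbert space of dimension at least $2$ must be a nonzero scalar multiple of a linear or conjugate-linear isometry; in particular, the accompanying field automorphism of $\mathbb{C}$ is the identity or complex conjugation. Since a scalar does not affect the induced transformation of subspaces, $f$ is induced by a linear or conjugate-linear isometry on $H$, as claimed.
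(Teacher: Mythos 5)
Your case (A) rests on a ``non-bijective Chow-type theorem'' asserting that every adjacency preserving transformation of ${\mathcal G}_{k}(H)$ is either induced by a semilinear embedding or has image confined to a star, a top, or some similar degenerate set. No such classification is available: Chow's theorem and its relatives in \cite[Chapter 3]{Pankov-book} concern bijections preserving adjacency in both directions (or isometric embeddings), and one-directional adjacency preservers are not known to admit any such short inventory --- this is precisely why the paper cannot quote a ready-made result and instead builds the semilinear operator by hand. The actual route is: use orthogonality preservation to show that a geodesic joining orthogonal elements is sent to a geodesic, hence that $f$ is ortho-adjacency preserving (Lemma \ref{lemma1-1}); then compare maximal compatible subsets of stars ($n-k+1$ elements, or infinite) with those of tops ($k+1$ elements) via Lemma \ref{lemma0-0}, which is where $\dim H>2k$ enters, to conclude that stars go into stars; this yields induced maps $f_{k-1},\dots,f_{1}$, and only at the level of ${\mathcal G}_{1}(H)$ does the Fundamental Theorem of Projective Geometry produce a semilinear operator. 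Your exclusion of the star and top alternatives via orthogonal pairs is correct as far as it goes, but the list of alternatives you are excluding from is not known to be exhaustive, so the key step of case (A) is missing.

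In case (OA) you correctly identify the crux --- upgrading ortho-adjacency preservation to adjacency preservation --- but you only describe a strategy (``recognize adjacency through configurations of common ortho-adjacent witnesses'') without carrying it out. The paper's Lemma \ref{lemma1-9} does this concretely: for adjacent $X,Y$ choose orthogonal lines $P,Q\subset (X+Y)^{\perp}$, form $X'=(X\cap Y)+P$ and $Y'=(X\cap Y)+Q$, embed $X,X',Y'$ in a maximal compatible subset of the star $[X\cap Y\rangle_{k}$, and invoke Lemma \ref{lemma0-0} again to force the image into a star, so that $f(X)$ and $f(Y)$ both contain the $(k-1)$-dimensional subspace $f(X')\cap f(Y')$; injectivity then gives adjacency. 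Your final step (an injective semilinear operator sending orthogonal vectors to orthogonal vectors is a nonzero scalar multiple of a linear or conjugate-linear isometry) is correct and matches the paper's Lemma \ref{lemma0-2}, but both reductions to that final step contain genuine gaps as written.
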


For the case when $\dim H=2k$, we can prove only the following weak version of Theorem 1.

\begin{prop}
Suppose that $\dim H=2k>2$.  
Let $f$ be an orthogonality preserving transformation of ${\mathcal G}_{k}(H)$ 
which preserves the adjacency relation in both directions. 
Then $f$ is the bijection induced by an unitary or anti-unitary operator on $H$.
\end{prop}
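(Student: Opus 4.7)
Since $\dim H = 2k$, the only element of $\mathcal{G}_{k}(H)$ orthogonal to a given $X$ is $X^{\perp}$; hence preservation of orthogonality upgrades to the identity $f(X^{\perp}) = f(X)^{\perp}$ for every $X$, so $f$ intertwines with the orthocomplementation involution on $\mathcal{G}_{k}(H)$. This is the extra rigidity available in the boundary dimension, and it is the input that ought to turn a semilinear classification into an isometric one.

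The plan is to first establish that $f$ is injective, then to apply a finite-dimensional Chow-type theorem to recognize $f$ as induced by a semilinear automorphism of $H$, and finally to use the intertwining with $\perp$ to conclude that this automorphism is, up to rescaling, unitary or anti-unitary. For injectivity, suppose $f(X) = f(Y)$ with $X \ne Y$; using $k \ge 2$ one picks $Z \in \mathcal{G}_{k}(H)$ with $\dim(Z \cap X) = k-1$ and $\dim(Z \cap Y) \ne k-1$, so that $Z$ is adjacent to $X$ but not to $Y$, and then adjacency preservation in both directions forces $f(Z)$ to be simultaneously adjacent and non-adjacent to $f(X) = f(Y)$. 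Applying the Chow-type rigidity for adjacency-preserving self-maps of $\mathcal{G}_{k}(H)$ in the spirit of \cite[Chapter 3]{Pankov-book}, one concludes that $f$ is induced either by a semilinear automorphism $S$ of $H$ or, because $\dim H = 2k$, by the composition of such with orthocomplementation, i.e.\ $f(X) = S(X)^{\perp}$. In either branch the identity $f(X^{\perp}) = f(X)^{\perp}$ specialises to $S(X^{\perp}) = S(X)^{\perp}$, and a standard orthogonality-to-isometry argument forces $S$ to be a positive scalar multiple of a unitary or anti-unitary operator; after rescaling, $S$ itself may be taken (anti-)unitary, and $f$ is then a bijection because $S$ is invertible.

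The delicate step that I expect to demand the most work is the elimination of the dual branch $f(X) = S(X)^{\perp}$: it respects both hypotheses of the proposition, yet for $k \ge 2$ the orthocomplementation map $X \mapsto X^{\perp}$ on $\mathcal{G}_{k}(H)$ is not induced by any linear or conjugate-linear operator on $H$, so this branch is genuinely different from the desired conclusion and must somehow be excluded. I would look for the missing invariant in the combinatorics of ortho-adjacent pairs or in the action of $f$ on maximal cliques in the adjacency graph (the so-called stars and tops), on which the two Chow branches act in structurally different ways; alternatively, one may absorb the dual branch into the stated conclusion, in the spirit of the dimension $2k$ clause in Moln\'ar's theorem recalled in the introduction. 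Executing this exclusion cleanly is, in my view, the crux of the argument; the rest of the proof is then a routine assembly of Chow-type rigidity and the Wigner-style orthogonality-to-isometry step.
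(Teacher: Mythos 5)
Your overall route coincides with the paper's: prove injectivity, invoke the star/top dichotomy for injective maps preserving adjacency in both directions, pass to a semilinear (or dual--semilinear) representation, and then use orthogonality to upgrade the semilinear operator to an isometry. The paper cites \cite[Section 3.3]{Pankov-book} for exactly the dichotomy you call ``Chow-type rigidity'': either every star is mapped into a star (case (S)), in which case the machinery of Subsection 5.1 (the chain $f_{k},f_{k-1},\dots,f_{1}$, the Fundamental Theorem of Projective Geometry, and the lemma on orthogonality preserving semilinear operators) produces a unitary or anti-unitary operator inducing $f$; or every star is mapped into a top (case (T)). One caveat on your phrasing: classical Chow rigidity concerns bijections, and since $f$ is only known to be injective you genuinely need the non-surjective version from the cited book --- but that is precisely what the paper uses, so this is a matter of citation precision rather than a gap in the idea.

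Your instinct about the ``delicate step'' is correct, and you should stop looking for a missing invariant: the dual branch cannot be excluded from the hypotheses. The orthocomplementation $X\mapsto X^{\perp}$ on $\mathcal{G}_{k}(H)$ with $\dim H=2k$ preserves orthogonality (orthogonal $k$-dimensional subspaces are exactly the complementary pairs) and preserves adjacency in both directions (since $\dim(X\cap Y)=k-1$ if and only if $\dim(X^{\perp}\cap Y^{\perp})=\dim\bigl((X+Y)^{\perp}\bigr)=k-1$), yet for $k\ge 2$ it carries the star $[S\rangle_{k}$ onto the top $\langle S^{\perp}]_{k}$ and hence is not induced by any semilinear operator. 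So it satisfies every hypothesis of the Proposition while failing its literal conclusion. The paper's own proof does not exclude case (T) either: it composes $f$ with the orthocomplementation, observes that the composition falls under case (S) and is therefore induced by a unitary or anti-unitary operator, and declares that this gives the claim. What is actually established is the amended statement you propose in your final paragraph --- $f$ is induced by a unitary or anti-unitary operator, or is the composition of such a map with the orthocomplementation --- exactly parallel to the $\dim H=2k$ clause in Moln\'ar's theorem quoted in the Introduction. In short, your proposal is essentially the paper's argument, and the one step you could not complete is one that the paper does not (and cannot) complete for the statement as literally written.
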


We use Theorem 1 to prove the following.

\begin{theorem}
If the dimension of $H$ is finite and greater than $2k$,
then every transformation of ${\mathcal  G}_{k}(H)$ preserving the orthogonality relation in both directions 
is the bijective transformation of ${\mathcal G}_{k}(H)$ induced by an unitary or anti-unitary operator on $H$.
\end{theorem}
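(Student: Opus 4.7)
The plan is to verify one of the additional hypotheses of Theorem 1 for $f$, invoke that theorem to obtain a (possibly conjugate-)linear isometry $U:H\to H$ inducing $f$, and then use finiteness of $\dim H$ to upgrade $U$ to a unitary or anti-unitary operator. First I would establish that $f$ is injective: if $f(X)=f(Y)$, then for every $Z\in\mathcal{G}_{k}(H)$ the two-sided orthogonality preservation yields $Z\perp X\Longleftrightarrow f(Z)\perp f(X)=f(Y)\Longleftrightarrow Z\perp Y$, so $\mathcal{G}_{k}(X^{\perp})=\mathcal{G}_{k}(Y^{\perp})$; since $\dim H\ge 2k$ forces each of $X^{\perp},Y^{\perp}$ to be spanned by its $k$-dimensional subspaces, we conclude $X^{\perp}=Y^{\perp}$ and hence $X=Y$.

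The main step is to show that $f$ preserves adjacency. The key is an orthogonality-only characterization: writing $X^{\ominus}:=\{Z\in\mathcal{G}_{k}(H):Z\perp X\}=\mathcal{G}_{k}(X^{\perp})$, one has $X^{\ominus}\cap Y^{\ominus}=\mathcal{G}_{k}((X+Y)^{\perp})$, and the inclusion $X^{\ominus}\cap Y^{\ominus}\subseteq X^{\ominus}\cap Z^{\ominus}$ is equivalent to $X+Z\subseteq X+Y$. Therefore $X,Y$ are adjacent if and only if $X\ne Y$ and no $Z\in\mathcal{G}_{k}(H)\setminus\{X\}$ yields a strict enlargement $X^{\ominus}\cap Y^{\ominus}\subsetneq X^{\ominus}\cap Z^{\ominus}$ (non-emptiness of $\mathcal{G}_{k}((X+Y)^{\perp})$ for adjacent $X,Y$ being ensured by $\dim H>2k$). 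Given adjacent $X,Y$, if $f(X),f(Y)$ were not adjacent then $\dim(f(X)+f(Y))\ge k+2$, and choosing a hyperplane $W$ of $f(X)$ together with a vector $v\in(f(X)+f(Y))\setminus f(X)$ produces $V:=W+\langle v\rangle$, adjacent to $f(X)$, distinct from $f(X)$, contained in $f(X)+f(Y)$, and yielding the strict enlargement $f(X)^{\ominus}\cap f(Y)^{\ominus}\subsetneq f(X)^{\ominus}\cap V^{\ominus}$. The task is to transport this back through $f$ and contradict maximality on the domain side.

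The principal obstacle is exactly this transport: the witness $V$ need not lie in $\mathrm{Im}(f)$, so the naive pullback $f^{-1}(V)$ may be empty. I would resolve this by exploiting the finiteness of $\dim H$: $\mathrm{Im}(f)$ is sufficiently rich in the orthogonality graph of $\mathcal{G}_{k}(H)$ to furnish an element $Z\in\mathrm{Im}(f)$ realizing the required strict enlargement on the domain side, using two-sided orthogonality preservation and the abundance of mutually orthogonal $k$-subspaces afforded by $\dim H>2k$. An alternative route is to verify condition (OA) of Theorem 1 by characterizing ortho-adjacency (equivalent, for an adjacent pair, to $\dim(X\cap Y^{\perp})=1$) purely in terms of orthogonality of $k$-subspaces. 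That some genuine use of finite-dimensionality is unavoidable here is highlighted by \v{S}emrl's observation that infinite-dimensional orthogonality-preserving transformations (in both directions) need not arise from isometries.

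With adjacency (or ortho-adjacency) preservation in hand, Theorem 1 produces a linear or conjugate-linear isometry $U:H\to H$ inducing $f$. Because $\dim H$ is finite, the injective (conjugate-)linear self-map $U$ is automatically bijective, hence unitary or anti-unitary, and $f$ is the corresponding bijective transformation of $\mathcal{G}_{k}(H)$, as required.
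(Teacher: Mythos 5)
Your overall architecture (injectivity, then adjacency preservation, then Theorem 1, then bijectivity from finite dimension) matches the paper's, and your injectivity argument and final upgrade of the isometry to a unitary/anti-unitary operator are fine. But the central step --- that $f$ preserves adjacency --- is not actually proved. You correctly identify the obstacle: your witness $V$ certifying non-adjacency of $f(X),f(Y)$ need not lie in the image of $f$, so it cannot be pulled back. You then state that finite-dimensionality makes $\mathrm{Im}(f)$ ``sufficiently rich'' to furnish such a witness, but this is precisely the assertion that needs proof, and no argument is given. The alternative route via condition (OA) is likewise only named, not carried out. As it stands, the proposal reduces the theorem to an unproved claim.

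For comparison, the paper closes exactly this gap with a different, forward-direction device rather than a pullback. It first proves (Lemma~\ref{lemma2-2}) that if $Y\not\subset X_{1}+\dots+X_{i}$ and $\dim(X_{1}+\dots+X_{i})\le n-k$, then $f(Y)\not\subset f(X_{1})+\dots+f(X_{i})$; the witness here is an element $Z\in{\mathcal G}_{k}(H)$ orthogonal to $X_{1}+\dots+X_{i}$ but not to $Y$, whose image $f(Z)$ separates $f(Y)$ from the span of the $f(X_{j})$ --- note that $Z$ is chosen in the \emph{domain}, so no surjectivity is needed. Then, given adjacent $X,Y$, it extends them to a chain $X_{0}=X,X_{1}=Y,\dots,X_{n-2k}$ with $\dim(X_{0}+\dots+X_{j})=k+j$, so the total span has dimension exactly $n-k$ and admits an orthogonal $Z\in{\mathcal G}_{k}(H)$. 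If $f(X),f(Y)$ were not adjacent, the dimension count forced by Lemma~\ref{lemma2-2} would push $\dim\bigl(f(X_{0})+\dots+f(X_{n-2k})\bigr)$ above $n-k$, contradicting the existence of $f(Z)$ orthogonal to all $f(X_{j})$. You would need to supply an argument of this kind (or a genuinely worked-out substitute) before invoking Theorem 1. A further small omission: Theorem 1 requires $k>1$, so the case $k=1$ must be treated separately, as the paper does via the Fundamental Theorem of Projective Geometry.
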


\begin{rem}{\rm
Chow's theorem \cite{Chow} describes 
bijective transformations of Grassmannians (of vector spaces) preserving the adjacency relation in both directions.
We refer \cite{PZ} for a description of bijections preserving the ortho-adjacency relation (for some reflexive forms) in both directions.
}\end{rem}

\section{Grassmann graph}
The {\it Grassmann graph} $\Gamma_{k}(H)$ is the graph whose vertex set is ${\mathcal G}_{k}(H)$
and whose edges are pairs of adjacent elements.
The case when $k=1$ is trivial  (any two distinct elements of ${\mathcal G}_{1}(H)$ are adjacent)
and we suppose that $k>1$.

For a subspace $S$ of dimension not greater than $k$ 
we denote by $[S\rangle_{k}$ the set of all $k$-dimensional subspaces containing $S$;
this set is called a {\it star} if  $\dim S=k-1$.
If $U$ is a subspace whose dimension is not less than $k$,
then we write $\langle U]_{k}$ for the set of all $k$-dimensional subspaces contained in $U$;
we say that this is a {\it top} if $\dim U=k+1$.

A {\it clique} in a graph is a set formed by mutually adjacent vertices. 
It is clear that stars and tops are cliques in $\Gamma_{k}(H)$.
Conversely, every maximal clique of $\Gamma_{k}(H)$ is a star or a top (see, for example, \cite{Pankov-book}).

Two subspaces $X,Y\subset H$ are called {\it compatible} if there are mutually orthogonal subspaces $X',Y',Z$
such that 
$$X=X'+Z\;\mbox{ and }\; Y=Y'+Z.$$
For example, any two incident or orthogonal subspaces are compatible. 
Two subspaces of $H$ are compatible if and only if there is an orthogonal basis of $H$
such that these subspaces are spanned by subsets of this basis.
Two elements of ${\mathcal G}_{k}(H)$ are ortho-compatible if they are adjacent and compatible. 

A subset of ${\mathcal G}_{k}(H)$ is called {\it compatible} if any two distinct elements from this subset are compatible. 
Every maximal compatible subset of ${\mathcal G}_{k}(H)$ is an {\it orthogonal apartment}, i.e.
it consists of all $k$-dimensional subspaces spanned by subsets of a certain orthogonal basis for $H$
\cite{Pankov}. 
Compatible subsets of cliques are formed by mutually ortho-adjacent elements.

\begin{lemma}\label{lemma0-0}
Every maximal compatible subset of a top contains precisely $k+1$ elements.
Every maximal compatible subset of a star contains precisely $n-k+1$ elements if $\dim H=n$ is finite,
and it is infinite if $H$ is infinite-dimensional.
\end{lemma}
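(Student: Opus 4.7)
The plan is to identify each clique (top or star) with the projective space of a suitable auxiliary Hilbert space in such a way that ortho-adjacency within the clique corresponds to orthogonality of lines in that auxiliary space. Recall from the paragraph just before the lemma that compatible subsets of a clique are exactly sets of mutually ortho-adjacent elements, so maximal compatible subsets of a top or star are precisely maximal sets of pairwise ortho-adjacent $k$-subspaces.

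First I would treat the top. Fix $U$ with $\dim U=k+1$ and for $X\in\langle U]_{k}$ let $X^{*}$ denote the $1$-dimensional orthogonal complement of $X$ inside $U$. For distinct $X,Y\in\langle U]_{k}$ one has $X+Y=U$ and $X\cap Y$ is $(k-1)$-dimensional, so $X$ and $Y$ are automatically adjacent. A direct computation (using $U=(X\cap Y)+X^{*}+Y^{*}$ and the fact that each $X^{*}$ is orthogonal to $X$, hence to $X\cap Y$) shows that $X$ and $Y$ are ortho-adjacent precisely when $X^{*}\perp Y^{*}$. Thus $X\mapsto X^{*}$ turns maximal pairwise ortho-adjacent families in $\langle U]_{k}$ into maximal pairwise orthogonal families of lines in $U$, which are exactly the orthonormal bases of $U$. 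Since $\dim U=k+1$, every such maximal family has exactly $k+1$ elements.

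For the star, fix $S$ with $\dim S=k-1$. Every $X\in[S\rangle_{k}$ is of the form $S+\ell$ for a unique line $\ell\subset S^{\perp}$, and for two such subspaces $X=S+\ell_{X}$, $Y=S+\ell_{Y}$ with $\ell_{X}\neq\ell_{Y}$ one checks immediately that $X\cap Y=S$ (so they are adjacent) and that $X,Y$ are ortho-adjacent iff $\ell_{X}\perp\ell_{Y}$. Hence $X\mapsto\ell_{X}$ identifies maximal compatible subsets of $[S\rangle_{k}$ with orthonormal bases of $S^{\perp}$. If $\dim H=n<\infty$ then $\dim S^{\perp}=n-k+1$, giving the stated count; if $H$ is infinite-dimensional then so is $S^{\perp}$, and every orthonormal basis of $S^{\perp}$ is infinite.

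The only genuine content is the verification that, inside the clique, the compatibility relation coincides with the relation "associated lines are orthogonal"; once this is observed, the cardinality statements reduce to the well-known fact that maximal orthonormal systems in a Hilbert space are orthonormal bases. I do not foresee any real obstacle beyond being careful with this identification.
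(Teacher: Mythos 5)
Your argument is correct and supplies exactly the "easy verification" that the paper leaves to the reader: identifying elements of a top $\langle U]_{k}$ with their orthocomplements in $U$, and elements of a star $[S\rangle_{k}$ with lines in $S^{\perp}$, so that ortho-adjacency becomes orthogonality of lines and maximal compatible subsets become orthonormal bases of $U$ (dimension $k+1$) or of $S^{\perp}$ (dimension $n-k+1$, or infinite). This is the intended route; no gaps.
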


\begin{proof}
Easy verification.
\end{proof}

The distance $d(v,w)$ between two vertices $v$ and $w$ in a connected graph 
is the smallest number of edges in a path connecting these vertices.
Every path between $v$ and $w$ which is formed by $d(v,w)$ edges is called a {\it geodesic}.
The Grassmann graph $\Gamma_{k}(H)$ is connected and the distance $d(X,Y)$
between $X,Y\in {\mathcal G}_{k}(H)$ in this graph is equal to $k-\dim(X\cap Y)$.
So, we have $d(X,Y)=k$ if and only if $X\cap Y=0$.
In particular, the distance between two orthogonal elements of ${\mathcal G}_{k}(H)$ is equal to $k$.

Let $X,Y\in {\mathcal G}_{k}(H)$ and 
$$X=X_{0},X_{1},\dots, X_{i}=Y,\;\;\;i=d(X,Y)$$
be a geodesic in $\Gamma_{k}(H)$.
An easy verification shows that for every $j\in \{1,\dots,i\}$ we have 
$$\dim(X\cap X_{j})=k-j\;\mbox{ and }\;\dim(Y\cap X_{j})=k-i+j.$$
Suppose that $X$ and $Y$ are orthogonal. 
Then $i=k$ and  $X_{j}$ is the orthogonal sum of $X\cap X_{j}$ and $Y\cap X_{j}$.
This means that $X_{j}$ is compatible to both $X$ and $Y$.
If $0<j<l<i$, then $X\cap X_{l}$ is contained in $X\cap X_{j}$, and $Y\cap X_{j}$ is contained in $Y\cap X_{l}$
which implies that $X_{j}$ and $X_{l}$ are compatible.

Conversely, let us consider  compatible $X,Y\in {\mathcal G}_{k}(H)$.
We take $Z\in {\mathcal G}_{k}(H)$ intersecting $Y$ precisely in $(X\cap Y)^{\perp}\cap Y$
and orthogonal to $X$.
Then $X,Y,Z$ are mutually compatible and there is an orthogonal apartment containing them.
This apartment contains  a geodesic joining $X$ with $Z$ and passing throught $Y$.

So, we get the following characterization of compatibility relation in terms of  orthogonality and adjacency.

\begin{lemma}\label{lemma0-1}
Every geodesic in $\Gamma_{k}(H)$ joining orthogonal elements consists of mutually compatible elements.
Any two compatible $X,Y\in {\mathcal G}_{k}(H)$ are contained in a certain geodesic of $\Gamma_{k}(H)$
connecting $X$ with an element orthogonal to $X$.
\end{lemma}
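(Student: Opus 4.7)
The plan is to treat the two assertions separately, each by a short dimension argument combined with the description of orthogonal apartments already in the excerpt.

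\textbf{First assertion.} For a geodesic $X = X_0, X_1, \ldots, X_k = Y$ with $X\perp Y$, the distance formula $d(A,B) = k - \dim(A\cap B)$ forces $\dim(X\cap X_j) = k-j$ and $\dim(Y\cap X_j) = j$. Since $X\perp Y$, the intersections $X\cap X_j$ and $Y\cap X_j$ are orthogonal and their dimensions sum to $k=\dim X_j$, so $X_j$ is their orthogonal direct sum. To conclude that $X_j$ and $X_l$ are compatible for arbitrary $j<l$, I would establish the chains
$$X\cap X_l\subset X\cap X_j\quad\text{and}\quad Y\cap X_j\subset Y\cap X_l,$$
reducing to the consecutive case $l=j+1$. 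Here the dimension formula inside $X_j$ gives
$$\dim\bigl((X\cap X_j)\cap(X_j\cap X_{j+1})\bigr)\ge (k-j)+(k-1)-k = k-j-1,$$
and since this intersection is contained in $X\cap X_{j+1}$, which itself has dimension $k-j-1$, equality of dimensions forces $X\cap X_{j+1}\subset X\cap X_j$; the second containment is symmetric. An orthonormal basis of $H$ extending o.n.\ bases through the chains $X\cap X_l\subset X\cap X_j\subset X$ and $Y\cap X_j\subset Y\cap X_l\subset Y$ then spans both $X_j$ and $X_l$, giving their compatibility.

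\textbf{Second assertion.} Given compatible $X,Y$, write $X=X'\oplus W$ and $Y=Y'\oplus W$ with $W=X\cap Y$, $X',Y',W$ mutually orthogonal, and set $m=\dim W$. Since
$$\dim\bigl((X+Y)^\perp\bigr) = \dim H - (2k-m) \ge m,$$
I may pick an $m$-dimensional subspace $Z''\subset (X+Y)^\perp$ and set $Z=Y'\oplus Z''$. Then $Z\in\mathcal{G}_k(H)$, $Z\perp X$, $Y\cap Z=Y'$, and the four pieces $W,X',Y',Z''$ are pairwise orthogonal. So $\{X,Y,Z\}$ is a mutually compatible triple and therefore lies in an orthogonal apartment. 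Within the apartment, successive one-vector swaps produce a geodesic of length $k-m$ from $X$ to $Y$ and a geodesic of length $m$ from $Y$ to $Z$; their concatenation has length $k = d(X,Z)$ and is the required geodesic from $X$ through $Y$ to an element orthogonal to $X$.

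The main technical point is the containment $X\cap X_{j+1}\subset X\cap X_j$ in the first part; once it is in hand, everything else reduces to tracking orthogonal direct sums and to the basis-swap description of geodesics inside an orthogonal apartment.
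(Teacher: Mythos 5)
Your proposal is correct and follows essentially the same route as the paper: the same computation of $\dim(X\cap X_j)$ and $\dim(Y\cap X_j)$ along the geodesic together with the nested chains of intersections for the first assertion, and the same choice of $Z$ meeting $Y$ in $(X\cap Y)^\perp\cap Y$ and orthogonal to $X$, placed in an orthogonal apartment, for the second. You simply supply the details (the dimension count forcing $X\cap X_{j+1}\subset X\cap X_j$, and the explicit construction $Z=Y'\oplus Z''$) that the paper labels an easy verification.
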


\section{Semilinear operators}
A mapping $L:H\to H$ is said to be a {\it semilinear operator} if 
$$L(x+y)=L(x)+L(y)$$
for all $x,y\in H$ and there is an endomorphism $\sigma$ of  the field ${\mathbb C}$ such that 
$$L(ax)=\sigma(a)L(x)$$
for all $a\in {\mathbb C}$ and all $x\in H$.
If an endomorphism of the field ${\mathbb C}$ is continuous, then it is identity or the conjugation. 
Non-continuous endomorphisms of ${\mathbb C}$ exist.
If a semilinear operator is bounded, then the associated endomorphism of ${\mathbb C}$ is continuous,
and the operator is linear or conjugate-linear.

Every injective semilinear operator on $H$ induces a transformation of ${\mathcal G}_{1}(H)$
and every non-zero scalar multiple of this operator defines the same transformation.
We will need the following consequence of the Fundamental Theorem of Projective Geometry
\cite{FF, Havlicek}.

\begin{fact}
Let $f$ be an injective transformation of ${\mathcal G}_{1}(H)$.
If for every $U\in {\mathcal G}_{2}(H)$ there is  $U'\in {\mathcal G}_{2}(H)$ such that 
$$f(\langle U]_{1})\subset \langle U']_{1}$$
and there is no $2$-dimensional subspace containing all elements from the image of $f$,
then $f$ is induced by an injective semilinear operator on $H$.
Such operator is unique up to a non-zero scalar multiple.
\end{fact}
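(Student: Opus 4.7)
The plan is to invoke the extended Fundamental Theorem of Projective Geometry in the form due to Faure--Fr\"olicher \cite{FF} and Havlicek \cite{Havlicek}, which removes the bijectivity and surjectivity requirements of the classical statement. The two hypotheses on $f$ translate exactly into the hypotheses of this extended theorem: the line-preservation condition $f(\langle U]_{1})\subset\langle U']_{1}$ says precisely that $f$ sends projective lines into projective lines, while the non-degeneracy assumption that no $2$-dimensional subspace contains the image of $f$ guarantees that the image spans at least a $3$-dimensional subspace of $H$, and in particular is not contained in any single projective line. The conclusion of the extended theorem is then exactly that $f$ is induced by an injective semilinear operator, unique up to a scalar multiple.

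To sketch the argument more directly, I would first pick three elements $X_{1},X_{2},X_{3}$ of ${\mathcal G}_{1}(H)$ whose images are in general position, so that suitable lifts $e_{i}\in X_{i}$ and $e'_{i}\in f(X_{i})$ form linearly independent triples. Using line preservation, for any scalar $a\in\mathbb{C}$ the image of the $1$-dimensional subspace $\mathbb{C}(e_{1}+ae_{2})$ lies in the projective line joining $f(X_{1})$ and $f(X_{2})$, and injectivity of $f$ forces it to be $\mathbb{C}(e'_{1}+\sigma(a)e'_{2})$ for a uniquely determined scalar $\sigma(a)\in\mathbb{C}$. Performing the same analysis with the pairs $(X_{1},X_{3})$ and $(X_{2},X_{3})$, and comparing via line preservation inside the projective plane spanned by $X_{1},X_{2},X_{3}$, shows that a single map $\sigma\colon\mathbb{C}\to\mathbb{C}$ governs all three lines and that $\sigma$ is an endomorphism of the field $\mathbb{C}$; additivity and multiplicativity of $\sigma$ come from classical projective constructions in which the sum and product of two scalars are encoded by incidence relations among specific collinear triples.

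Once $\sigma$ is constructed, I would define the candidate semilinear operator $L$ on ${\rm span}(e_{1},e_{2},e_{3})$ by $L(e_{i})=e'_{i}$ and $\sigma$-semilinear extension (after first rescaling $e'_{1},e'_{2},e'_{3}$ so that $f(\mathbb{C}(e_{1}+e_{2}+e_{3}))=\mathbb{C}(e'_{1}+e'_{2}+e'_{3})$, which fixes the normalization). To extend $L$ to all of $H$, any new $X\in{\mathcal G}_{1}(H)$ can be paired with two of the previously chosen basis points to form another $3$-dimensional reference, and the same construction applies. The main obstacle is verifying that the field endomorphism $\sigma$ is globally consistent across these local choices, so that the pieces of $L$ glue together into a well-defined semilinear operator on all of $H$; this is the technical heart of the Fundamental Theorem, and it relies on the richness of incidence configurations available once the ambient projective space has dimension at least $3$ (the hypothesis $\dim H\ge 3$, implicit via $\dim H\ge 2k>2$). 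Uniqueness of $L$ up to a non-zero scalar multiple follows from the observation that two semilinear operators inducing the same transformation of ${\mathcal G}_{1}(H)$ must differ by a scalar function on $H$, which additivity then forces to be constant on nonzero vectors.
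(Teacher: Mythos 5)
Your proposal is correct and matches the paper exactly: the paper offers no proof of this Fact, stating it as a known consequence of the non-bijective Fundamental Theorem of Projective Geometry with references to \cite{FF} and \cite{Havlicek}, which is precisely the theorem you invoke, and your translation of the hypotheses (injectivity, lines into lines, image not contained in a line) into that theorem's hypotheses is accurate. Your supplementary von Staudt-style sketch of how that cited theorem is proved is a reasonable outline of the standard argument, but it is not needed here any more than it is in the paper.
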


\begin{rem}{\rm
The injectivity of $f$ cannot be omitted, see \cite[Example 2.3]{Pankov-book}.
}\end{rem}

Also, we will use the following.

\begin{lemma}\label{lemma0-2}
If an injective  semilinear operator on $H$ sends orthogonal vectors to orthogonal vectors,
then it is a non-zero scalar multiple of a linear or conjugate-linear isometry.
\end{lemma}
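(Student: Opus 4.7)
The plan is to exploit orthogonality preservation through a one-parameter family of orthogonal pairs constructed from a given orthonormal pair, and extract from it both the isometry property and strong constraints on the associated field endomorphism $\sigma$.

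First I would fix two orthogonal unit vectors $x,y$ and observe the elementary identity that, for every $a\in\mathbb{C}$, the vectors
$$x+ay\quad\text{and}\quad \bar a\,x-y$$
are orthogonal. Applying $L$ and using that $L(x)\perp L(y)$ (by hypothesis) together with semilinearity, the inner product $\langle L(x)+\sigma(a)L(y),\,\sigma(\bar a)L(x)-L(y)\rangle=0$ collapses to the single scalar relation
$$\overline{\sigma(\bar a)}\,\|L(x)\|^{2}=\sigma(a)\,\|L(y)\|^{2}.$$
Specializing to $a=1$ (where $\sigma(1)=1$) gives $\|L(x)\|=\|L(y)\|$ for every orthonormal pair (both sides are nonzero since $L$ is injective); substituting this back yields $\sigma(a)=\overline{\sigma(\bar a)}$ for all $a\in\mathbb{C}$.

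The relation $\sigma(a)=\overline{\sigma(\bar a)}$, applied to real $a$, shows that $\sigma$ carries $\mathbb{R}$ into $\mathbb{R}$. Hence $\sigma|_{\mathbb{R}}$ is a field endomorphism of $\mathbb{R}$; since positive reals are squares, $\sigma|_{\mathbb{R}}$ is order-preserving, and being the identity on $\mathbb{Q}$ and monotone on the dense subset $\mathbb{Q}$, it must be the identity on $\mathbb{R}$. Then from $\sigma(i)^{2}=\sigma(-1)=-1$ we get $\sigma(i)=\pm i$, so $\sigma$ is either the identity or complex conjugation; in particular $\sigma$ is continuous, so $L$ is either linear or conjugate-linear, and $|\sigma(c)|=|c|$ for every $c\in\mathbb{C}$.

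It remains to show that the norm $\|L(u)\|$ is constant on unit vectors; call this constant $c$ (it is positive by injectivity of $L$). Given any two unit vectors $u,v$, in dimension $\ge 3$ one chooses a unit $w$ orthogonal to both and uses the orthonormal-pair equality twice to get $\|L(u)\|=\|L(w)\|=\|L(v)\|$, while the degenerate case $v=\lambda u$ with $|\lambda|=1$ is handled by $|\sigma(\lambda)|=1$; in dimension $2$ one expands $u$ in a fixed orthonormal basis $\{e_{1},e_{2}\}$ and uses $L(e_{1})\perp L(e_{2})$ together with $|\sigma(\alpha)|=|\alpha|$. For an arbitrary $x\ne 0$ then $\|L(x)\|=|\sigma(\|x\|)|\,\|L(x/\|x\|)\|=c\|x\|$, so $c^{-1}L$ is a linear or conjugate-linear norm preserving operator, which, by polarization, is an isometry. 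The step I expect to be the main obstacle is setting up the right two-parameter orthogonality identity so that it yields \emph{both} the norm equality $\|L(x)\|=\|L(y)\|$ and the conjugation-compatibility of $\sigma$; once that single computation is in hand, the remaining arguments are standard.
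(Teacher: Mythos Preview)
Your argument is correct, and it follows a genuinely different route from the paper. The paper's proof uses only the special pair $x+y,\,x-y$ to obtain $\|L(x)\|=\|L(y)\|$ for orthonormal $x,y$, extends this to all unit vectors via a common orthogonal $z$ (using $\dim H\ge 3$), and then invokes the fact that a bounded semilinear operator has a \emph{continuous} field endomorphism to conclude that $\sigma$ is the identity or conjugation; a final orthonormal-basis computation produces the scalar multiple of an isometry. Your one-parameter identity with $x+ay$ and $\bar a x-y$ does more work: in a single stroke it yields both the norm equality and the relation $\sigma(a)=\overline{\sigma(\bar a)}$, from which you determine $\sigma$ \emph{algebraically} (via $\sigma(\mathbb{R})\subset\mathbb{R}$ and the rigidity of endomorphisms of $\mathbb{R}$), bypassing any appeal to boundedness or continuity. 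This makes your proof more self-contained, and as you note it even survives in dimension $2$; the cost is the slightly more delicate initial computation. One small remark: the paper globally assumes $\dim H\ge 3$, so your dimension-$2$ digression is unnecessary here, though harmless.
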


\begin{proof}
Let $L$ be a semilinear operator on $H$ sending orthogonal vectors to orthogonal vectors.
If $x,y\in H$ are orthogonal unit vectors, then $x+y,x-y$ are orthogonal. 
Since $L(x),L(y)$ and $L(x)+L(y),L(x)-L(y)$ are pairs of orthogonal vectors,
we have $$||L(x)||=||L(y)||.$$
If unit vectors $x,y\in H$ are non-orthogonal, then we choose a unit vector $z$ orthogo\-nal to both $x,y$ 
(this is possible, since $\dim H\ge 3$ by our assumption) and get 
$$||L(x)||=||L(z)||=||L(y)||.$$
So, the function $x\to ||L(x)||$ is constant on the set of unit vectors which means that $L$ is bounded,
i.e. $L$ is linear or conjugate-linear.

If $\{e_{i}\}_{i\in I}$ is an orthonormal basis of $H$, then  there is an orthonormal basis $\{e'_{i}\}_{i\in I}$  of $L(H)$ such that 
$L(e_{i})=a_{i}e'_{i}$ for non-zero scalars $a_{i}\in {\mathbb C}$.
We consider the orthogonal vectors $e_{i}+e_{j},e_{i}-e_{j}$  and establish that $|a_{i}|=|a_{j}|$ for any pair $i,j\in I$.
This implies the existence of a positive real number $b$ such that for every $i\in I$ we have $a_{i}=bb_{i}$, where $b_{i}$ is a unit complex number.
The linear or conjugate-linear operator transferring every $e_{i}$ to $b_{i}e'_{i}$ is an isometry.
We have $L=bL'$, where $L'$ is one of these operators. 
\end{proof}

\section{Proof of Theorem 1 and Proposition 1}
Let $f$ be an orthogonality preserving transformation of ${\mathcal G}_{k}(H)$ and $k>1$.

\subsection{The case (A)}
Suppose that $\dim H>2k$ and $f$ is adjacency preserving.

\begin{lemma}\label{lemma1-1}
The transformation $f$ is ortho-adjacency preserving.
\end{lemma}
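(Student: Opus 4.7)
The plan is to reduce ortho-adjacency to the relations that $f$ already respects, via Lemma \ref{lemma0-1}. The crucial preliminary observation is the following characterization: for two adjacent elements $X,Y\in\mathcal{G}_k(H)$ with $S = X\cap Y$ and unit vectors $x\in X$, $y\in Y$ orthogonal to $S$, the single non-zero principal angle between $X$ and $Y$ equals $\arccos|\langle x,y\rangle|$. Hence $X,Y$ are ortho-adjacent precisely when $x\perp y$, which is equivalent to $S,\mathbb{C}x,\mathbb{C}y$ being mutually orthogonal, i.e.\ to $X$ and $Y$ being compatible. In other words, ortho-adjacency coincides with the conjunction of adjacency and compatibility.

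Suppose now that $X,Y$ are ortho-adjacent. By the second half of Lemma \ref{lemma0-1}, the compatible pair $X,Y$ sits inside some geodesic of $\Gamma_k(H)$ joining $X$ to an element $Z$ orthogonal to $X$ (here $\dim H>2k$ guarantees the existence of such a $Z$). Since $d(X,Y)=1$, the vertex $Y$ must be the immediate successor of $X$ along this geodesic, so I obtain a chain
\[
X = X_0,\; X_1 = Y,\; X_2,\; \dots,\; X_k = Z
\]
of length $k$ in which consecutive subspaces are adjacent and $Z\perp X$.

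Next I apply $f$. Adjacency preservation turns the chain into a walk $f(X_0),f(X_1),\dots,f(X_k)$ of length $k$ in $\Gamma_k(H)$, while orthogonality preservation yields $f(X_k)\perp f(X_0)$, so the distance between the endpoints of this walk is exactly $k$. A walk of length $k$ connecting two vertices at distance $k$ cannot revisit any vertex, since excising a loop would produce a strictly shorter walk contradicting the distance; therefore the walk is a geodesic. The first half of Lemma \ref{lemma0-1} then asserts that its vertices are pairwise compatible; in particular $f(X)$ and $f(Y)$ are compatible, and they are also adjacent. Invoking the characterization established in the first paragraph promotes this to ortho-adjacency.

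The only delicate point, and the main obstacle I anticipate, is confirming that the image walk is genuinely a geodesic: everything hinges on orthogonality preservation forcing the endpoints to land at distance $k$, thereby ruling out any short-cutting. Once that is secured, Lemma \ref{lemma0-1} supplies the compatibility of $f(X)$ and $f(Y)$ for free, and no further hypothesis on $f$ is needed.
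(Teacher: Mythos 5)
Your proof is correct and follows essentially the same route as the paper's: embed the ortho-adjacent pair in a geodesic ending at an element orthogonal to $X$ via Lemma \ref{lemma0-1}, observe that the image is a path of length $k$ between orthogonal (hence distance-$k$) elements and therefore a geodesic, and conclude compatibility of $f(X),f(Y)$ from the first half of Lemma \ref{lemma0-1}. Your two preliminary verifications (that ortho-adjacency is exactly adjacency plus compatibility, and that the image walk cannot revisit a vertex) are points the paper leaves implicit, and both are handled correctly.
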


\begin{proof}
Suppose that $X,Y\in {\mathcal G}_{k}(H)$ are ortho-adjacent.
Then $f(X),f(Y)$ are adjacent and we need to show that they are compatible.
By Lemma \ref{lemma0-1}, $X$ and $Y$ are contained in a certain geodesic $\gamma$ of $\Gamma_{k}(H)$ 
which connects $X$ with an element $Z\in {\mathcal G}_{k}(H)$ orthogonal to $X$.
Since $f$ is adjacency preserving, $f(\gamma)$ is a path in $\Gamma_{k}(H)$.
The elements $X$ and $Z$ are orthogonal,  and the same holds for $f(X)$ and $f(Z)$.
Then
$$d(X,Z)=d(f(X),f(Z))=k$$
which implies that $f(\gamma)$ is a geodesic  in $\Gamma_{k}(H)$ connecting $f(X)$ with $f(Z)$ and containing $f(Y)$.
Lemma \ref{lemma0-1} gives the claim.
\end{proof}

\begin{lemma}\label{lemma1-2}
For every star ${\mathcal S}\subset {\mathcal G}_{k}(H)$ there is the unique star containing $f({\mathcal S})$.
\end{lemma}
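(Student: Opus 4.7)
The plan is to combine the classification of maximal cliques in $\Gamma_{k}(H)$ (as stars or tops) with a counting argument based on maximal compatible subsets.

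First, since $\mathcal{S}$ is a star it is a clique in $\Gamma_{k}(H)$, and because $f$ is adjacency preserving, the image $f(\mathcal{S})$ is again a clique. Hence $f(\mathcal{S})$ is contained in some maximal clique $\mathcal{C}$, which by the classification recalled in Section 3 is either a star or a top. The task splits into two parts: ruling out the case when $\mathcal{C}$ is a top, and establishing uniqueness.

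To rule out the top, I would invoke Lemma \ref{lemma1-1} together with Lemma \ref{lemma0-0}. Fix a maximal compatible subset $\mathcal{M}\subset \mathcal{S}$. By Lemma \ref{lemma0-0}, $|\mathcal{M}|=n-k+1$ if $\dim H=n$ is finite and $\mathcal{M}$ is infinite otherwise. Since compatible subsets of cliques consist of mutually ortho-adjacent elements, and $f$ preserves ortho-adjacency by Lemma \ref{lemma1-1}, the image $f(\mathcal{M})$ consists of mutually ortho-adjacent (in particular pairwise adjacent, hence pairwise distinct) elements, so $|f(\mathcal{M})|=|\mathcal{M}|$. On the other hand, the assumption $\dim H>2k$ gives $n-k+1>k+1$ in the finite case, and $|\mathcal{M}|$ is infinite in the infinite-dimensional case; in either case $|f(\mathcal{M})|$ strictly exceeds $k+1$, which by Lemma \ref{lemma0-0} is the size of any maximal compatible subset of a top. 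This contradicts the assumption that $\mathcal{C}$ is a top, so $\mathcal{C}$ must be a star.

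For uniqueness, I would use the elementary observation that two distinct stars $[S_{1}\rangle_{k}$ and $[S_{2}\rangle_{k}$ share at most one element: any common element contains $S_{1}+S_{2}$, and since $\dim(S_{1}+S_{2})\ge k$ whenever $S_{1}\ne S_{2}$, there is at most one such $k$-dimensional subspace. Because $|f(\mathcal{S})|\ge |f(\mathcal{M})|\ge 2$, the star containing $f(\mathcal{S})$ is uniquely determined. The main obstacle I anticipate is purely organizational: keeping the size comparison valid simultaneously in the finite- and infinite-dimensional regimes, and making sure that the ortho-adjacency preservation delivered by Lemma \ref{lemma1-1} actually forces $f$ to be injective on $\mathcal{M}$, which is what lets me transport the size of $\mathcal{M}$ to the size of its image and separate stars from tops.
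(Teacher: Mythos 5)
Your proposal is correct and follows essentially the same route as the paper: pass to a maximal compatible subset of the star, use Lemma \ref{lemma1-1} to see its image is a compatible set of mutually ortho-adjacent elements, and compare its cardinality (via Lemma \ref{lemma0-0} and $\dim H>2k$) with the bound $k+1$ for compatible subsets of a top; uniqueness then follows since two distinct stars meet in at most one element. You merely spell out more explicitly the cardinality bookkeeping and the injectivity of $f$ on the compatible subset, which the paper leaves implicit.
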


\begin{proof}
Since $f$ is adjacency preserving, $f({\mathcal S})$ is a clique in $\Gamma_{k}(H)$ (not necessarily maximal),
and it is contained in a certain maximal clique (a star or a top).
Let ${\mathcal X}$ be a maximal compatible subset of ${\mathcal S}$.
By Lemma \ref{lemma1-1},  $f({\mathcal X})$ is a compatible subset in a star or a top. 
Lemma \ref{lemma0-0} shows that $f({\mathcal X})$ cannot be contained in a top
(since $\dim H>2k$).
Therefore, $f({\mathcal S})$ is a subset in a star. 
The intersection of two distinct stars contains at most one element.
This means that there is the unique star containing $f({\mathcal S})$.
\end{proof}

Therefore, $f$ induces a transformation $f_{k-1}$ of ${\mathcal G}_{k-1}(H)$ such that
$$f([S\rangle_{k})\subset [f_{k-1}(S)\rangle_{k}$$
for every $S\in {\mathcal G}_{k-1}(H)$.
Then
$$f_{k-1}(\langle X]_{k-1})\subset \langle f(X)]_{k-1}$$
for every $X\in {\mathcal G}_{k}(H)$.

\begin{lemma}\label{lemma1-3}
The transformation $f_{k-1}$ is orthogonality preserving.
\end{lemma}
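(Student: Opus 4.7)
The plan is to reduce orthogonality of $(k-1)$-dimensional subspaces to orthogonality of suitable $k$-dimensional extensions, apply the hypothesis that $f$ is orthogonality preserving, and then pull the conclusion back through the containment $f_{k-1}(\langle X]_{k-1})\subset \langle f(X)]_{k-1}$.

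Concretely, let $S,S'\in{\mathcal G}_{k-1}(H)$ be orthogonal. Then $\dim(S+S')=2k-2$, so by the assumption $\dim H>2k$ we have $\dim (S+S')^{\perp}\ge 3$, which lets us pick mutually orthogonal unit vectors $x,x'\in (S+S')^{\perp}$. Setting $X=S+\langle x\rangle$ and $X'=S'+\langle x'\rangle$, we obtain $X,X'\in {\mathcal G}_{k}(H)$ with $S\subset X$, $S'\subset X'$, and $X\perp X'$ (the four cross-orthogonalities $S\perp S'$, $S\perp x'$, $x\perp S'$, $x\perp x'$ all hold by construction). Since $f$ is orthogonality preserving, $f(X)\perp f(X')$.

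Finally, the relation $f_{k-1}(\langle X]_{k-1})\subset \langle f(X)]_{k-1}$ gives $f_{k-1}(S)\subset f(X)$, and likewise $f_{k-1}(S')\subset f(X')$. Since $f(X)\perp f(X')$, this forces $f_{k-1}(S)\perp f_{k-1}(S')$, as required. There is no real obstacle here beyond the dimension count: the argument hinges entirely on having enough room in $(S+S')^{\perp}$ to carry out the orthogonal extension, which is exactly where the strict inequality $\dim H>2k$ is used (and which presumably is one reason the $\dim H=2k$ case is treated separately in Proposition~1).
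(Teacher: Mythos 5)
Your argument is correct and is essentially the paper's own proof: the paper likewise extends the orthogonal pair in ${\mathcal G}_{k-1}(H)$ to an orthogonal pair in ${\mathcal G}_{k}(H)$, applies the hypothesis on $f$, and concludes via the containments $f_{k-1}(X)\subset f(X')$. Your version merely makes the dimension count explicit; note only that this particular extension already works under $\dim H\ge 2k$ (one needs $\dim(S+S')^{\perp}\ge 2$, not $3$), so the strict inequality is not really what this lemma hinges on.
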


\begin{proof}
If $X$ and $Y$ are orthogonal elements of ${\mathcal G}_{k-1}(H)$,
then there exist orthogonal $X',Y'\in {\mathcal G}_{k}(H)$ containing $X$ and $Y$, respectively.
We have
$$f_{k-1}(X)\subset f(X'),\;f_{k-1}(Y)\subset f(Y')\;\mbox{ and }\; f(X')\perp f(Y')$$
which implies that $f_{k-1}(X)$ and $f_{k-1}(Y)$ are orthogonal.
\end{proof}

\begin{lemma}\label{lemma1-4}
The following assertions are fulfilled: 
\begin{enumerate}
\item[(i)]  If $X,Y\in {\mathcal G}_{k-1}(H)$ are adjacent, then $f_{k-1}(X)$ and $f_{k-1}(Y)$ are adjacent or coincident.
\item[(ii)]  $f_{k-1}$ is ortho-adjacency preserving.
\end{enumerate}
\end{lemma}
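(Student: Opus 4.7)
The plan is to dispatch (i) by a quick dimension count and (ii) by mirroring the geodesic argument of Lemma \ref{lemma1-1}, but one dimension down.

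For (i), if $X, Y \in \mathcal{G}_{k-1}(H)$ are adjacent then $X+Y$ lies in $\mathcal{G}_k(H)$. The defining property of $f_{k-1}$ places both $f_{k-1}(X)$ and $f_{k-1}(Y)$ inside the single $k$-dimensional subspace $f(X+Y)$. But any two $(k-1)$-dimensional subspaces of a $k$-dimensional space either coincide or intersect in dimension $k-2$, so $f_{k-1}(X)$ and $f_{k-1}(Y)$ are coincident or adjacent.

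For (ii), let $X, Y \in \mathcal{G}_{k-1}(H)$ be ortho-adjacent; in particular they are compatible. Lemma \ref{lemma0-1} applied to $\Gamma_{k-1}(H)$ (the proof is identical with $k-1$ in place of $k$) produces a geodesic $X = X_0, X_1, \dots, X_{k-1} = Z$ in $\Gamma_{k-1}(H)$ running from $X$ to some $Z$ orthogonal to $X$ and passing through $Y$; since $d(X,Y)=1$ we must have $Y = X_1$. Applying $f_{k-1}$ term-by-term and invoking (i), we obtain a walk of length at most $k-1$ from $f_{k-1}(X)$ to $f_{k-1}(Z)$. By Lemma \ref{lemma1-3} the endpoints are orthogonal, and orthogonal elements of $\mathcal{G}_{k-1}(H)$ lie at distance exactly $k-1$. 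Consequently no edge of the image walk can collapse, the walk is itself a geodesic from one element to an orthogonal one, and a second application of Lemma \ref{lemma0-1} shows that its vertices are mutually compatible. In particular the consecutive vertices $f_{k-1}(X)$ and $f_{k-1}(Y)$ are adjacent (no first-edge collapse) and compatible, i.e., ortho-adjacent.

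The only delicate point is the non-collapse step, which rests squarely on the orthogonality of endpoints provided by Lemma \ref{lemma1-3}; without that input, (i) alone would leave room for the image walk to shorten, and one could not conclude that $f_{k-1}(X)$ and $f_{k-1}(Y)$ remain distinct, let alone compatible.
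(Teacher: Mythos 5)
Your proof is correct and follows essentially the same route as the paper: part (ii) is precisely the paper's geodesic argument (push a geodesic from $X$ to an element orthogonal to $X$ through $Y$ forward by $f_{k-1}$, use Lemma \ref{lemma1-3} to pin the endpoints at distance $k-1$ so nothing collapses, then apply Lemma \ref{lemma0-1} again to get compatibility). In (i) you use the containment $f_{k-1}(\langle X+Y]_{k-1})\subset\langle f(X+Y)]_{k-1}$ where the paper argues with the intersecting stars $[X\rangle_{k}$ and $[Y\rangle_{k}$, but these are dual formulations of the same consequence of Lemma \ref{lemma1-2} and equally immediate.
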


\begin{proof}
The statements are trivial for $k=2$.
Indeed, any two distinct elements of ${\mathcal G}_{1}(H)$ are adjacent,
and elements of ${\mathcal G}_{1}(H)$ are ortho-adjacent if they are orthogonal.
Consider the case when $k>2$.

(i)
If $X,Y\in {\mathcal G}_{k-1}(H)$ are adjacent, 
then the corresponding stars $[X\rangle_{k}$ and $[Y\rangle_{k}$ have a non-empty intersection. 
The transformation $f$ sends these stars to subsets of the same star or 
subsets of distinct stars with a non-empty intersection. 
This gives the claim.

(ii) 
By (i), $f_{k-1}$ sends every path in $\Gamma_{k-1}(H)$ to a path (possibly of shorter length).
So, the proof (ii) is similar to the proof of Lemma \ref{lemma1-1}.
\end{proof}

In the case when $k\ge 3$,
we use Lemma \ref{lemma1-4} and arguments from the proof of Lemma \ref{lemma1-2} to show that
for every star ${\mathcal S}\subset {\mathcal G}_{k-1}(H)$  there is the unique star containing $f_{k-1}({\mathcal S})$.

Step by step, we construct a sequence $f=f_{k},f_{k-1},\dots, f_{1}$,
where every $f_{i}$ is an orthogonality and ortho-adjacency preserving transformation of ${\mathcal G}_{i}(H)$.
If $i\ge 2$, then we have
$$f_{i}([Y\rangle_{i})\subset [f_{i-1}(Y)\rangle_{i}$$
for every $Y\in {\mathcal G}_{i-1}(H)$ and
$$f_{i-1}(\langle X]_{i-1})\subset \langle f(X)]_{i-1}$$
for every $X\in {\mathcal G}_{i}(H)$.
This implies that 
\begin{equation}\label{eq1-1}
f_{1}(\langle X]_{1})\subset \langle f(X)]_{1}\;\mbox{ if }\;X\in {\mathcal G}_{k}(H).
\end{equation}

\begin{lemma}\label{lemma1-8}
The transformation $f_{1}$ is injective.
\end{lemma}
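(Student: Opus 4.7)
The plan is to argue by contradiction: suppose $f_1(P)=f_1(Q)=R$ for some distinct $P,Q\in {\mathcal G}_1(H)$, and engineer a pair of distinct adjacent elements of ${\mathcal G}_k(H)$ that $f$ must collapse, contradicting adjacency preservation. Orthogonality preservation of $f_1$ immediately rules out $P\perp Q$, since otherwise $R=f_1(P)\perp f_1(Q)=R$, impossible. So $P\not\perp Q$ and $U:=P+Q$ is genuinely $2$-dimensional.

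Using $\dim H>2k$, I would pick a $(k-1)$-dimensional subspace $Z\subset (P+Q)^\perp$ and set $X_P:=P+Z$, $X_Q:=Q+Z$. These are distinct $k$-dimensional subspaces adjacent in $\Gamma_k(H)$ (intersecting exactly in $Z$, since $U\cap Z=0$ forces $\dim(X_P+X_Q)=k+1$). The aim is to show $f(X_P)=f(X_Q)$, which contradicts the case (A) hypothesis that adjacent pairs map to adjacent (hence distinct) pairs.

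The core technical step is the orthogonality claim $R\perp f_{k-1}(Z)$. Since $Z\perp P$ and $\dim H>2k$, I would construct $Y_P,Y_Z\in {\mathcal G}_k(H)$ with $P\subset Y_P$, $Z\subset Y_Z$ and $Y_P\perp Y_Z$: first enlarge $Z$ to $Y_Z$ by a unit vector in $(P+Z)^\perp$, then set $Y_P=P+V$ for some $(k-1)$-dimensional $V\subset (P+Y_Z)^\perp$, whose dimension $\dim H-k-1$ is at least $k-1$ by hypothesis. Orthogonality preservation of $f$ then yields $f(Y_P)\perp f(Y_Z)$; and the chain containments $R=f_1(P)\subset f(Y_P)$ and $f_{k-1}(Z)\subset f(Y_Z)$, both inherited from the construction of the sequence $f_1,\dots,f_k$, force $R\perp f_{k-1}(Z)$. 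In particular $R\not\subset f_{k-1}(Z)$.

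To conclude, from $Z\subset X_P\cap X_Q$ and the containments $P\subset X_P$, $Q\subset X_Q$, one obtains $f_{k-1}(Z)+R\subset f(X_P)\cap f(X_Q)$; since $R\not\subset f_{k-1}(Z)$ this sum is already $k$-dimensional, so $f(X_P)=f_{k-1}(Z)+R=f(X_Q)$. But $X_P\neq X_Q$ are adjacent, so the adjacency-preserving hypothesis of case (A) forces $f(X_P)\neq f(X_Q)$, the desired contradiction. The main obstacle is the orthogonality claim $R\perp f_{k-1}(Z)$: it is the only place that simultaneously exploits the dimension hypothesis $\dim H>2k$ and the multi-level containment structure linking $f_1$ to $f_{k-1}$ and $f$.
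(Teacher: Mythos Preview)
Your proof is correct and follows essentially the paper's approach: build adjacent $X_P,X_Q\in{\mathcal G}_k(H)$ sharing a $(k{-}1)$-dimensional core orthogonal to $P+Q$, then show $f(X_P)=f(X_Q)$, contradicting adjacency preservation. The paper streamlines this by taking the core to be $P_1+\cdots+P_{k-1}$ for mutually orthogonal lines $P_i\perp P,Q$ and working only with $f_1$: then $f_1(P_1),\dots,f_1(P_{k-1}),f_1(P)$ are $k$ mutually orthogonal lines contained in $f(X_P)$ by \eqref{eq1-1}, hence span it, which makes your detour through $f_{k-1}$ and the auxiliary pair $Y_P,Y_Z$ unnecessary.
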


\begin{proof}
For any distinct $P,Q\in {\mathcal G}_{1}(H)$ there exist mutually orthogonal $P_{1},\dots, P_{k-1}\in {\mathcal G}_{1}(H)$ which are orthogonal to both $P,Q$.
Consider the $k$-dimensional subspaces 
$$X=P_{1}+\dots +P_{k-1}+P\;\mbox{ and }\;Y=P_{1}+\dots+P_{k-1}+Q.$$
Since $f_{1}(P_{1}),\dots,f_{1}(P_{k-1}),f_{1}(P)$ are mutually orthogonal, \eqref{eq1-1} implies that 
$$f(X)=f_{1}(P_{1})+\dots +f_{1}(P_{k-1})+f_{1}(P).$$
Similarly, we establish that
$$f(Y)=f_{1}(P_{1})+\dots+f_{1}(P_{k-1})+f_{1}(Q).$$
The equality $f_{1}(P)=f_{1}(Q)$ implies that $f(X)=f(Y)$.
On the other hand, $X,Y$ are adjacent and the same holds for $f(X),f(Y)$.
\end{proof}

So, $f_{1}$ is an orthogonality preserving injective transformation of ${\mathcal G}_{1}(H)$
such that 
$$f_{1}(\langle Y]_{1})\subset \langle f_{2}(Y)]_{1}$$
for every $Y\in {\mathcal G}_{2}(H)$.
This means that  $f_{1}$ satisfies the conditions of the Fundamental Theorem of Projective Geometry (Fact 1), i.e. 
$f_{1}$ is induced by an injective semilinear operator on $H$.
This operator sends orthogonal vectors to orthogonal vectors
and Lemma \ref{lemma0-2} implies that it is a non-zero scalar multiple of 
a linear or conjugate-linear isometry. 
Using \eqref{eq1-1}, we show that this isometry induces $f$.

\subsection{The case (OA)}
Suppose that $\dim H>2k$ and $f$ is an ortho-adjacency preserving injection.
By Subsection 5.1, the required statement is a direct consequence of the following.

\begin{lemma}\label{lemma1-9}
The transformation $f$ is adjacency preserving.
\end{lemma}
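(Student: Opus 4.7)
The plan is to parallel the argument of Lemma~\ref{lemma1-2} from case~(A), but with ortho-adjacency replacing adjacency in the clique-preservation step. The goal is to show that for adjacent $X,Y\in{\mathcal G}_{k}(H)$ the images $f(X),f(Y)$ lie in a common star; injectivity of $f$ then forces $\dim(f(X)\cap f(Y))=k-1$, giving adjacency. To this end, set $S:=X\cap Y$ (of dimension $k-1$) and work inside the star $[S\rangle_{k}$ containing both $X$ and $Y$.

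The key construction is to build two max compatible subsets ${\mathcal M}_{X}\ni X$ and ${\mathcal M}_{Y}\ni Y$ of $[S\rangle_{k}$ that share at least two elements. Let $p_{x},p_{y}$ be unit vectors spanning the 1-dimensional subspaces $X\cap S^{\perp}$ and $Y\cap S^{\perp}$, and choose mutually orthogonal unit vectors $q_{1},\dots,q_{m}$ in $(X+Y)^{\perp}$, where $m=\dim H-k-1$ (countably many if $H$ is infinite-dimensional). Each $Z_{i}:=S+{\mathbb C}q_{i}$ belongs to $[S\rangle_{k}$ and is ortho-adjacent to $X$, to $Y$, and to every other $Z_{j}$. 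The sets $\{p_{x},q_{1},\dots,q_{m}\}$ and $\{p_{y},q_{1},\dots,q_{m}\}$ are orthonormal systems in $S^{\perp}$, each one vector short of a basis; adjoining a single unit vector to each yields orthonormal bases of $S^{\perp}$, producing max compatible subsets ${\mathcal M}_{X},{\mathcal M}_{Y}$ of $[S\rangle_{k}$ that both contain $\{Z_{1},\dots,Z_{m}\}$. Under $\dim H>2k$ we have $m\ge k\ge 2$, so this shared collection has at least two elements.

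Next I transfer to the image. Since $f$ preserves ortho-adjacency, $f({\mathcal M}_{X})$ is a set of mutually ortho-adjacent (hence mutually adjacent and compatible) elements, so a compatible clique in $\Gamma_{k}(H)$, contained in a max clique. By injectivity of $f$, $|f({\mathcal M}_{X})|=|{\mathcal M}_{X}|=\dim H-k+1>k+1$, and Lemma~\ref{lemma0-0} then forces this max clique to be a star $[S'_{X}\rangle_{k}$ rather than a top. The same reasoning gives $f({\mathcal M}_{Y})\subset[S'_{Y}\rangle_{k}$ for some $(k-1)$-subspace $S'_{Y}$. The (at least two) common images $f(Z_{1}),\dots,f(Z_{m})$ lie in both stars, and since two distinct stars in ${\mathcal G}_{k}(H)$ share at most one element, $S'_{X}=S'_{Y}$. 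Hence $f(X)$ and $f(Y)$ both lie in a common star, so $f(X)\cap f(Y)$ contains a $(k-1)$-subspace; injectivity of $f$ gives $f(X)\ne f(Y)$ and therefore $\dim(f(X)\cap f(Y))=k-1$, i.e., $f(X)$ and $f(Y)$ are adjacent.

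The main obstacle is the first step, where I need the overlap ${\mathcal M}_{X}\cap{\mathcal M}_{Y}$ to contain at least two elements; this is where the hypothesis $\dim H>2k$ is critically used, simultaneously ensuring $m\ge 2$ (enough shared $Z_{i}$) and $|{\mathcal M}_{X}|>k+1$ (so the image max clique cannot be a top). The remaining steps use only standard properties of stars and tops already developed in the paper.
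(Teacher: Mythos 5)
Your proof is correct and follows essentially the same route as the paper: the paper also works in the star $[X\cap Y\rangle_k$, builds auxiliary elements $(X\cap Y)+P$ and $(X\cap Y)+Q$ with $P,Q$ orthogonal lines in $(X+Y)^{\perp}$ (your $Z_i$'s), extends to maximal compatible subsets whose images are too large to fit in a top by Lemma~\ref{lemma0-0}, and uses the fact that two distinct stars meet in at most one element to conclude that $f(X)$ and $f(Y)$ lie in a common star. The only cosmetic difference is that you use all $m\ge 2$ of the $Z_i$ where the paper uses exactly two.
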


\begin{proof}
If $X,Y\in {\mathcal G}_{k}(H)$ are adjacent, then $\dim(X+Y)=k+1$ and
$$\dim(X+Y)^{\perp}\ge 2$$
(since $\dim H >2k>2$). This implies the existence of orthogonal $P,Q\in {\mathcal G}_{1}(H)$ contained in $(X+Y)^{\perp}$.
We take
$$X'=(X\cap Y)+P\;\mbox{ and }\; Y'=(X\cap Y)+Q.$$
Then $X,X',Y'$ are mutually ortho-adjacent and the same holds for $Y,X',Y'$.
Let ${\mathcal X}$ be a maximal compatible subset of the star $[X\cap Y\rangle_{k}$ containing $X,X',Y'$.
Then $f({\mathcal X})$ is a compatible subset in a star or a top.
Since $\dim H>2k$, Lemma \ref{lemma0-0} implies that $f({\mathcal X})$ cannot be contained in a top,
i.e. it is a subset of a star. 
This means that $f(X)$ contains the $(k-1)$-dimensional subspace $f(X')\cap f(Y')$.
Similarly, we show that this subspace is contained in $f(Y)$.
Since $f$ is injective, $f(X)$ and $f(Y)$ are adjacent.
\end{proof}

\subsection{Proof of Proposition 1}
Suppose that $\dim H=2k$ and $f$ preserves the adjacency relation in both directions. 
By  \cite[Section 3.3]{Pankov-book}, for every star ${\mathcal S}\subset {\mathcal G}_{k}(H)$
there is the unique maximal clique (a star or a top) containing $f({\mathcal S})$, and one of the following possibilities is realized:
\begin{enumerate}
\item[(S)] all stars go to subsets of stars,
\item[(T)] all stars go to subsets of tops.
\end{enumerate}
In the case (S), $f$ is induced by an unitary or anti-unitary operator on $H$
(arguments from Subsection 5.1).

In the case (T), we consider the composition of $f$ and the orthocomplementation. 
This transformation satisfies (S), i.e. it is induced by an unitary or anti-unitary operator.
This gives the claim.

\section{Proof of Theorem 2}

Let $f$ be  a transformation of ${\mathcal G}_{k}(H)$ preserving the orthogonality relation in both directions.
We suppose that $\dim H=n$ is finite and greater than $2k$.

\begin{lemma}\label{lemma2-1}
The transformation $f$ is injective. 
\end{lemma}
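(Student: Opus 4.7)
\textbf{Plan of proof for Lemma \ref{lemma2-1}.} The aim is to show that if $f(X)=f(Y)$ for some $X,Y\in\mathcal{G}_k(H)$, then $X=Y$. The approach is to translate the equality $f(X)=f(Y)$ into an equality between the ``$k$-dimensional orthogonal shadows'' of $X$ and of $Y$, and then to recover each $k$-dimensional subspace from its orthogonal shadow using the dimension hypothesis $n>2k$.

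First, I would exploit the hypothesis that orthogonality is preserved \emph{in both directions}: for any $Z\in\mathcal{G}_k(H)$, the chain of equivalences $Z\perp X \iff f(Z)\perp f(X)=f(Y)\iff Z\perp Y$ shows that $X$ and $Y$ have exactly the same set of orthogonal partners in $\mathcal{G}_k(H)$. In other words, the sets $\langle X^{\perp}]_k$ and $\langle Y^{\perp}]_k$ of $k$-dimensional subspaces contained in $X^{\perp}$ and $Y^{\perp}$ coincide.

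Second, I would argue that this forces $X^{\perp}=Y^{\perp}$ (and hence $X=Y$). If instead $X\neq Y$, then neither is contained in the other (both have the same dimension $k$), so we can find a vector $v\in X^{\perp}$ with $v\notin Y^{\perp}$. Because $\dim X^{\perp}=n-k\geq k+1$, the vector $v$ can be extended to a $k$-dimensional subspace $Z\subset X^{\perp}$; such $Z$ belongs to $\langle X^{\perp}]_k$ but not to $\langle Y^{\perp}]_k$, contradicting what was proved in the previous step.

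I do not expect any serious obstacle: the proof rests only on the two-sided orthogonality preservation and the trivial linear-algebra fact that a subspace of codimension $k$ is determined by the $k$-dimensional subspaces it contains, once the ambient space is large enough ($n\geq 2k+1$ suffices). The assumption that $\dim H$ is finite is not actually needed for this lemma; it will enter later in the proof of Theorem 2, when surjectivity has to be derived from injectivity.
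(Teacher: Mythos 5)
Your proposal is correct and is essentially the paper's argument in contrapositive form: both hinge on producing, for distinct $X,Y\in{\mathcal G}_k(H)$, an element $Z\in{\mathcal G}_k(H)$ orthogonal to $X$ but not to $Y$, and then applying two-sided orthogonality preservation. The only difference is that the paper simply asserts the existence of such a $Z$, whereas you spell out its construction (pick $v\in X^{\perp}\setminus Y^{\perp}$ and extend inside $X^{\perp}$, using $n-k\ge k$); your remark that finite-dimensionality is not needed here is also accurate.
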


\begin{proof}
For distinct $X,Y\in {\mathcal G}_{k}(H)$ we take $Z \in {\mathcal G}_{k}(H)$ orthogonal to $X$ and non-orthogonal to $Y$.
Then $f(Z)$ is orthogonal to $f(X)$ and non-orthogonal to $f(Y)$.
This implies that $f(X)$ and $f(Y)$ are distinct.
\end{proof}

Consider the case when $k=1$.
For every $U\in {\mathcal G}_{2}(H)$ we take mutually orthogonal $Q_{1},\dots,Q_{n-2}\in  {\mathcal G}_{1}(H)$ contained in $U^{\perp}$. 
If $P\in \langle U]_{1}$, then $f(P)$ is contained in the orthogonal complement of 
the $(n-2)$-dimensional subspace 
$$f(Q_{1})+\dots+f(Q_{n-2}).$$
The images of three mutually orthogonal elements of ${\mathcal G}_{1}(H)$ are mutually orthogonal.
So, $f$ satisfies the conditions of the Fundamental Theorem of Projective Geometry (Fact 1).
Since $H$ is finite-dimensional, Lemma \ref{lemma0-2} shows that $f$ is induced by an unitary or anti-unitary operator.

From this moment we will suppose that $k>1$.

\begin{lemma}\label{lemma2-2}
Let $X_{1},\dots X_{i},Y$ be mutually distinct elements of ${\mathcal G}_{k}(H)$ such that 
$Y$ is not contained in $X_{1}+\dots +X_{i}$ and 
\begin{equation}\label{eq2-1}
\dim (X_{1}+\dots +X_{i})\le n-k.
\end{equation}
Then $f(Y)$ is not contained in $f(X_{1})+\dots +f(X_{i})$.
\end{lemma}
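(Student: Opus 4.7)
The plan is to exhibit a ``separating'' subspace $Z\in \mathcal{G}_k(H)$ which is orthogonal to every $X_j$ but not to $Y$, and then transport this configuration through $f$ using the fact that $f$ preserves orthogonality in both directions. Concretely, set $W=X_{1}+\dots +X_{i}$. By hypothesis $\dim W\le n-k$, so $\dim W^{\perp}\ge k$.

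Since $Y\not\subset W$, I would pick any $y\in Y\setminus W$; its orthogonal projection $w$ onto $W^{\perp}$ is nonzero (otherwise $y$ would lie in $W$), and it satisfies
\[
\langle y,w\rangle =\|w\|^{2}>0.
\]
Because $\dim W^{\perp}\ge k$, the vector $w$ can be completed to a $k$-dimensional subspace $Z\subset W^{\perp}$. Each $X_{j}$ lies in $W$, hence $Z\perp X_{j}$; on the other hand, $w\in Z$ and $y\in Y$ witness that $Z$ and $Y$ are \emph{not} orthogonal.

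Now I would apply the two-sided orthogonality preservation: from $Z\perp X_{j}$ we get $f(Z)\perp f(X_{j})$ for every $j$, so $f(Z)$ is orthogonal to the sum $f(X_{1})+\dots +f(X_{i})$. If $f(Y)$ were contained in this sum, then $f(Z)\perp f(Y)$, and pulling back through the ``both directions'' hypothesis would yield $Z\perp Y$, contradicting the construction of $Z$. Hence $f(Y)\not\subset f(X_{1})+\dots +f(X_{i})$.

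The only point requiring care is the dimension count that guarantees the existence of the $k$-dimensional subspace $Z$ inside $W^{\perp}$ containing $w$; this is exactly the purpose of the hypothesis \eqref{eq2-1}. No appeal to adjacency, to the Grassmann graph, or to the semilinear machinery of the previous section is needed at this stage — the argument rests solely on the two-sided orthogonality preservation and on elementary linear algebra in the finite-dimensional space $H$.
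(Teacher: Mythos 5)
Your proof is correct and follows essentially the same route as the paper: both construct a $Z\in{\mathcal G}_{k}(H)$ inside $(X_{1}+\dots+X_{i})^{\perp}$ that is non-orthogonal to $Y$, and then use the two-sided orthogonality preservation to derive a contradiction. You merely spell out the existence of $Z$ (via the projection of a vector $y\in Y\setminus W$ onto $W^{\perp}$) more explicitly than the paper does.
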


\begin{proof}
The condition \eqref{eq2-1} implies the existence of elements of ${\mathcal G}_{k}(H)$ orthogonal to $X_{1}+\dots +X_{i}$.
Since $Y$ is not contained in $X_{1}+\dots +X_{i}$, 
there is $Z\in{\mathcal G}_{k}(H)$ orthogonal to $X_{1}+\dots +X_{i}$ and non-orthogonal to $Y$.
Then $f(Z)$ is orthogonal to $f(X_{1})+\dots +f(X_{i})$ and non-orthogonal to $f(Y)$.
This implies the required.
\end{proof}

\begin{lemma}\label{lemma2-3}
The transformation $f$ is adjacency preserving.
\end{lemma}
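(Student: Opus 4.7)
Let $X, Y \in \mathcal{G}_k(H)$ be adjacent, and set $T := X + Y$, of dimension $k+1$, so that $T^{\perp}$ has dimension $n-k-1 \geq k$. By Lemma \ref{lemma2-1}, $f(X) \neq f(Y)$, so $\dim(f(X)+f(Y)) \geq k+1$; the plan is to show the matching upper bound $\leq k+1$ by using orthogonality to produce many $k$-subspaces inside $(f(X)+f(Y))^{\perp}$.

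The key observation is that every $W \in \mathcal{G}_k(T^{\perp})$ is orthogonal to both $X$ and $Y$, so orthogonality preservation (which $f$ satisfies in both directions) gives $f(W) \subset (f(X)+f(Y))^{\perp}$. The strategy is to show that the span of all these images is $(n-k-1)$-dimensional, which immediately yields $\dim(f(X)+f(Y))^{\perp} \geq n-k-1$, i.e.\ the desired bound. First I would pick a maximal mutually orthogonal family $Z_1, \ldots, Z_q \in \mathcal{G}_k(T^{\perp})$ with $q = \lfloor(n-k-1)/k\rfloor$. By orthogonality preservation the images $f(Z_1), \ldots, f(Z_q)$ are mutually orthogonal in $(f(X)+f(Y))^{\perp}$ and contribute dimension $qk$. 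In the divisible case $k \mid n-k-1$ this is already enough.

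The hard part is the remainder case $r := (n-k-1) - qk \in \{1, \ldots, k-1\}$, where the mutually orthogonal family falls $r$ dimensions short of spanning $T^{\perp}$. Here I would pick one more $Z_{q+1} \in \mathcal{G}_k(T^{\perp})$ with $Z_{q+1} \perp Z_1, \ldots, Z_{q-1}$ and $Z_1 + \cdots + Z_{q+1} = T^{\perp}$, so that $\dim(Z_q + Z_{q+1}) = k+r$, and aim to show $\dim(f(Z_q) + f(Z_{q+1})) \geq k+r$. For this the essential tool is Lemma \ref{lemma2-2}, applied with the configuration $\{Z_1, \ldots, Z_{q+1}\}$ whose sum has dimension $n-k-1 \leq n-k$: this lemma prevents $f$-images of $k$-subspaces outside $T^{\perp}$ (such as $f(X)$ and $f(Y)$ themselves) from being absorbed into $f(Z_1)+\cdots+f(Z_{q+1})$, and more delicately, applied to various distinguished members of the configuration, it forces the sum $f(Z_1)+\cdots+f(Z_{q+1})$ to reach the full dimension $n-k-1$.

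The main obstacle is precisely this remainder step: orthogonality preservation alone only accounts for the ``orthogonal part'' and gives the lower bound $qk$; bridging the remaining $r$ dimensions requires the non-containment statement of Lemma \ref{lemma2-2} plus the careful choice of non-orthogonal $Z_{q+1}$, and this is also where the hypothesis $\dim H > 2k$ enters (so that $\dim T^{\perp} \leq n-k$ for Lemma \ref{lemma2-2} to apply). Once $\dim(f(X)+f(Y))^{\perp} \geq n-k-1$ is established, we obtain $\dim(f(X)+f(Y)) \leq k+1$, and combined with injectivity equality holds; hence $f(X)$ and $f(Y)$ are adjacent.
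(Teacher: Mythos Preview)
Your dual strategy---bounding $\dim(f(X)+f(Y))^{\perp}$ from below by producing many $k$-subspaces of $T^{\perp}$ whose images land there---is sound, and the divisible case $r=0$ (and also $r=1$) goes through exactly as you say. The gap is in the remainder step for $r\ge 2$. With a \emph{single} extra subspace $Z_{q+1}$, Lemma~\ref{lemma2-2} can only ever yield a non-containment such as $f(Z_{q+1})\not\subset f(Z_1)+\dots+f(Z_q)$, which buys exactly one new dimension: you get $\dim\bigl(\sum_{i\le q+1} f(Z_i)\bigr)\ge qk+1$, and no rearrangement of the roles (``various distinguished members'') improves this, because every such application again adds at most one to a sum that already has dimension~$qk$. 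In particular there is no way, from the family $\{Z_1,\dots,Z_{q+1}\}$ alone, to force $\dim\bigl(f(Z_q)+f(Z_{q+1})\bigr)\ge k+r$ when $r\ge 2$; nothing rules out $f(Z_q)$ and $f(Z_{q+1})$ being adjacent. (A concrete instance where your bound falls short: $n=9$, $k=3$, so $q=1$, $r=2$.) The side observation that $f(X),f(Y)\not\subset\sum f(Z_i)$ is automatic from $\sum f(Z_i)\subset (f(X)+f(Y))^{\perp}$ and adds nothing.

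The repair is easy but changes the argument: rather than one $Z_{q+1}$, take a chain $Z_{q+1},\dots,Z_{q+r}\in\mathcal G_k(T^{\perp})$ with $\dim(Z_1+\dots+Z_{q+j})=qk+j$ for each $j$, and apply Lemma~\ref{lemma2-2} $r$ times to gain one dimension per step. Once you do this, however, you are essentially carrying out the paper's own proof on the orthogonal side: the paper builds the one-step chain $X_0,\dots,X_{n-2k}$ starting from $X,Y$ (rather than inside $T^{\perp}$), applies Lemma~\ref{lemma2-2} iteratively to get $\dim\sum f(X_j)\ge\dim(f(X)+f(Y))+(n-2k-1)$, and then uses a single $Z\in\mathcal G_k\bigl((X_0+\dots+X_{n-2k})^{\perp}\bigr)$ to force $\dim\sum f(X_j)\le n-k$. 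Your orthogonal-family shortcut is what makes the approaches look different, but the actual work---the chain plus iterated Lemma~\ref{lemma2-2}---is the same in both.
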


\begin{proof}
Let $X$ and $Y$ be adjacent elements of ${\mathcal G}_{k}(H)$. 
Consider a sequence $$X_{0},X_{1},\dots,X_{n-2k}$$ of elements from ${\mathcal G}_{k}(H)$ such that $X_{0}=X$, $X_{1}=Y$ and 
$$\dim (X_{0}+X_{1}+\dots +X_{j})=k+j$$ 
for every $j\in \{1,\dots,n-2k\}$.
Then $X_{j}$ is not contained in $X_{0}+\dots+X_{j-1}$.
We have $k+j\le n-k$ for every $j\in \{1,\dots,n-2k\}$ and Lemma \ref{lemma2-2} implies that 
$$f(X_{j})\not\subset f(X_{0})+\dots +f(X_{j-1}).$$
Therefore,
\begin{equation}\label{eq2-2}
\dim (f(X_{0})+f(X_{1})+\dots+f(X_{n-2k}))\ge \dim(f(X_{0})+f(X_{1}))+n-2k-1.
\end{equation}
If $f(X)=f(X_{0})$ and $f(Y)=f(X_{1})$ are not adjacent, then 
$$\dim(f(X_{0})+f(X_{1}))>k+1$$
and \eqref{eq2-2} shows that
$$\dim (f(X_{0})+f(X_{1})+\dots+f(X_{n-2k}))>n-k.$$
In this case, there is no element of ${\mathcal G}_{k}(H)$ orthogonal to all $f(X_{j})$.
On the other hand, the equality 
$$\dim (X_{0}+X_{1}+\dots +X_{n-2k})=n-k$$
implies the existence of $Z\in {\mathcal G}_{k}(H)$ orthogonal to all $X_{j}$.
Then $f(Z)$ is orthogonal to every $f(X_{j})$.
This contradiction shows that $f(X)$ and $f(Y)$ are adjacent.
\end{proof}

The statement is a consequence of Lemma \ref{lemma2-3} and Theorem 1.

\end{document}